 \newtheorem{theorem}{Theorem}[section]
 \newtheorem{corollary}[theorem]{Corollary}
 \theoremstyle{definition}
 \newtheorem{definition}[theorem]{Definition}
 \theoremstyle{remark}
  \numberwithin{equation}{section}
\renewcommand{\phi}{\varphi}
\renewcommand{\theta}{\vartheta}
\DeclareMathOperator{\tform}{\mathfrak{t}}
\DeclareMathOperator{\tfrom}{\mathfrak{t}}
\DeclareMathOperator{\wform}{\mathfrak{w}}
\DeclarePairedDelimiterX\sipt[2]{(}{)_{\tform}}{#1\,\delimsize\vert\,#2}
\DeclarePairedDelimiterX\sipv[2]{(}{)_{v}}{#1\,\delimsize\vert\,#2}
\DeclarePairedDelimiterX\sipw[2]{(}{)_{w}}{#1\,\delimsize\vert\,#2}
\newcommand{\alg}{\mathscr{A}}
\newcommand{\dupN}{\mathbb{N}}
\newcommand{\seq}[1]{(#1_{n})_{n\in\dupN}}
\newcommand{\dupC}{\mathbb{C}}
\newcommand{\pif}{\pi_f}
\newcommand{\dom}{\operatorname{dom}}
\newcommand{\ran}{\operatorname{ran}}
\newcommand{\lef}{\mathscr{L}(E;F)}
\newcommand{\D}{\mathscr{D}}
\newcommand{\DD}{\bar{\mathscr{D}}'}
\newcommand{\sigef}{\sigma(E,F)}
\newcommand{\sigfe}{\sigma(F,E)}
\newcommand{\hil}{H}
\newcommand{\hilf}{\hil_f}
\newcommand{\hila}{H_A}
\newcommand{\bh}{\mathscr{B}(\hil)}
\newcommand{\bhf}{\mathscr{B}(\hilf)}
\DeclarePairedDelimiterX\abs[1]{\lvert}{\rvert}{#1}
\DeclarePairedDelimiterX\sip[2]{(}{)}{#1\,\delimsize\vert\,#2}
\DeclarePairedDelimiterX\siptilde[2]{(}{)_{\!_{\widetilde{A}}}}{#1\,\delimsize\vert\,#2}
\DeclarePairedDelimiterX\sipf[2]{(}{)_{f}}{#1\,\delimsize\vert\,#2}
\DeclarePairedDelimiterX\sipg[2]{(}{)_{g}}{#1\,\delimsize\vert\,#2}
\DeclarePairedDelimiterX\siptw[2]{(}{)_{\tform+\wform}}{#1\,\delimsize\vert\,#2}
\DeclarePairedDelimiterX\set[2]{\{}{\}}{#1\,:\,#2}
\DeclarePairedDelimiterX\dual[2]{\langle}{\rangle}{#1,#2}
\DeclarePairedDelimiterX\sipa[2]{(}{)_{\!_A}}{#1\,\delimsize\vert\,#2}
\DeclarePairedDelimiterX\sipc[2]{(}{)_{\!_C}}{#1\,\delimsize\vert\,#2}
\DeclarePairedDelimiterX\sipab[2]{(}{)_{\!_{A+B}}}{#1\,\delimsize\vert\,#2}
\DeclarePairedDelimiterX\sipb[2]{(}{)_{\!_B}}{#1\,\delimsize\vert\,#2}
\newcommand{\anti}[1]{\bar{#1}^*}
\newcommand{\limn}{\lim\limits_{n\rightarrow\infty}}
\newcommand{\kismatrix}[4]{\begin{bsmallmatrix} #1 &  #2\\ #3& #4\end{bsmallmatrix}}
\newcommand{\kispair}[2]{\begin{bsmallmatrix} #1 \\  #2\end{bsmallmatrix}}
\begin{document}

%\begin{frontmatter}

\title{Operators on anti-dual pairs:\\ Generalized Schur complement}
%\tnotetext[mytitlenote]{Corresponding author: T. Titkos}

%% Group authors per affiliation:
\author{Zsigmond Tarcsay}
\address{Zs.~Tarcsay \\ Department of Applied Analysis  and Computational Mathematics\\ E\"otv\"os Lor\'and University, P\'azm\'any P\'eter s\'et\'any 1/c., Budapest H-1117, Hungary}
\thanks{The corresponding author Zs. Tarcsay was supported by DAAD-TEMPUS Cooperation Project ``Harmonic Analysis and Extremal Problems'' (grant no. 308015). Project no. ED 18-1-2019-0030 (Application-specific highly reliable IT
solutions)
has been implemented with the support provided from the National Research,
Development and Innovation Fund of Hungary, financed under the Thematic
Excellence
Programme funding scheme.}
\author{Tam\'as Titkos}
\address{T. Titkos\\ Alfr\'ed R\'enyi Institute of Mathematics\\ Re\'altanoda utca 13-15., Budapest H-1053, Hungary\\ and\\ BBS University of Applied Sciences\\
Alkotm\'any u. 9., Budapest H-1054, Hungary}
\thanks{
T. Titkos was supported by the Hungarian National Research, Development and Innovation Office  NKFIH (grant no. PD128374 and grant no. K115383), by the J\'anos Bolyai Research Scholarship of the Hungarian Academy of Sciences, and by the \'UNKP-19-4-BGE-1 New National Excellence Program of the Ministry for Innovation and Technology.}

\begin{abstract} The goal of this paper is to develop the theory of Schur complementation in the context of operators acting on anti-dual pairs. As a byproduct, we obtain a natural generalization of the parallel  sum and parallel difference, as well as the Lebesgue-type decomposition. To demonstrate how this operator approach works in application, we derive the corresponding results for operators acting on rigged Hilbert spaces, and for representable functionals of ${}^{*}$-algebras. 
\end{abstract}

\keywords{Positive operator, anti-duality, Schur complement, parallel sum, parallel difference, rigged Hilbert space, $^*$-algebra}

\subjclass[2010]{Primary 47B65, Secondary 28A12, 46K10}
%AMS Subject classification: Primary 47B65, Secondary 28A12, 46K10}

%\end{keyword}

%\end{frontmatter}
\maketitle

\section{Introduction}
Since the first appearance of the name of the ``Schur complement'' in \cite{S2}, the theory of partitioned matrices (or block operators) is an active field of research in linear algebra and functional analysis. The direction we are interested in is the problem of completing special operator systems. To formulate the central question in the most classical setting, consider the incomplete system $\mathfrak{S}=\kismatrix{A}{B}{B}{*}$ of positive semidefinite $n\times n$ matrices $A,B$. The task is to find a matrix $D$ for which $\kismatrix{A}{B}{B}{D}$ is a positive semidefinite $2n\times 2n$ matrix. 
If we denote by $A_B$ the smallest possible solution, then the Schur complement  of $D$ in the block-matrix $\kismatrix{A}{B}{B}{D}$ is $D-A_B$. Therefore, to find the Schur complement and to find the minimal operator that makes a system positive is the same problem. In this paper, we focus our attention to the completion problem.

Because of its wide-range applicability in pure and applied mathematics, a number of authors made a lot of efforts to extend the concept of Schur complement for various settings.  We mention first the fundamental work of Pekarev and \v{S}mul'jan \cite{Pekarev} on the connection between the shorted operator and positive completions of block operators in the context of Hilbert spaces. The corresponding result in Krein spaces has been developed by Contino, Maestripieri, and Marcantognini in \cite{CMM} (see also \cite{B,MMP}). The relation between extension, completion, and lifting problems of operators on both Hilbert and Krein spaces has been discussed in \cite{BaidiukHassi,B}. 
A quite general approach was developed  by Friedrich, G\"unther and Klotz in \cite{FGK}. They introduced a generalized Schur complement for non-negative $2\times 2$ block matrices whose entries are linear operators on linear spaces. 
In their considerations the setting is purely algebraic and therefore topology plays a minor role. 

In the present paper we are going to treat the above completion problem in an even more general setting that covers Hilbert, Krein, and linear spaces. Namely, we consider linear operators acting between a locally convex space and its topological anti-dual. The key idea of our approach is the observation that the block matrix completion problem can be formulated as an operator extension problem. This gives rise to invoke our corresponding Krein--von Neumann extension theory developed in \cite{KV}. Our aim is two folded: besides of solving the block completion problem in a quite general setting, we want to demonstrate how the developed method can be applied for structures like rigged Hilbert spaces and involutive algebras.

The paper is organized as follows: after collecting the basic notions and notations regarding operators acting on anti-dual pairs, in Subsection \ref{Ss:Krein} we recall the construction of the generalized Krein--von Neumann extension. 

Subsection \ref{Ss:Schur}, the cornerstone of this paper, is devoted to provide necessary and sufficient conditions to guarantee that an incomplete operator system is positive. In case of positivity, Theorem \ref{T:Schur} gives an explicit formula for the minimal solution of the completion problem. 

Following the method of Pekarev and \v{S}mul'jan, we introduce the notions of parallel sum and difference as an immediate application. This will be done in Subsection \ref{Ss:PsPdLeb}. Furthermore, by means of these notions we extend a very recent result appeared in \cite{Tarcsay-ADP Lebdec} on Lebesgue-type decompositions. Namely, in  Theorem \ref{T:ADP-Lebdec} we exhibit an alternative description of the absolutely continuous part. 

Section \ref{S:Appl} is fully devoted to applications: to demonstrate how this operator approach works in concrete structures, we derive the corresponding results for operators acting on rigged Hilbert spaces and for representable functionals on involutive algebras. 

\section{Positive completions of operator systems}

\subsection{Operators on anti-dual pairs} \label{Ss:Krein}
Throughout this paper we follow the notations of \cite{KV}. For more details we refer the reader to \cite[Section 2 and 3]{KV}.
An anti-dual pair denoted by $\dual FE$ is a pair of two complex linear spaces intertwined by a separating sesquilinear map $\dual\cdot\cdot:F\times E\to\dupC$.

If $\mathcal{D}$ is a linear subspace of $E$, a linear operator $A:\mathcal{D}\to F$ is said to be \emph{positive}, if $\dual{Ax}{x}\geq 0$ for all $x\in\mathcal{D}$. In this paper we always assume $E$ and $F$ to be endowed with the corresponding weak topologies $\sigef$, and $\sigfe$, respectively. 
We will call the anti-dual pair $\dual FE$ weak-* sequentially complete if the topological vector space $(F,\sigfe)$ is sequentially complete. Recall that the (topological) anti-dual space $\anti{E}$ of a barrelled space $E$ is quasi-complete, hence the anti-dual pair $\dual{\anti{E}}{E}$ is a weak-* sequentially complete. It is furthermore obvious that the algebraic anti-dual $\bar E'$  is  weakly (sequentially) complete, hence the class of weak-* sequentially complete anti-dual pairs includes Hilbert, Banach, Fr\'echet spaces, and also vector spaces without topology.

Let $\dual{F_1}{E_1}$ and $\dual{F_2}{E_2}$ be anti-dual pairs and  $T:E_1\to F_2$ a weakly continuous (that is, $\sigma(E_1,F_1)$-$\sigma(F_2,E_2)$-continuous) linear operator. Then the weakly continuous linear operator $T^*:E_2\to F_1$ satisfying \begin{equation*}
    \dual{Tx_1}{x_2}_2=\overline{\dual{T^*x_2}{x_1}_1},\qquad  x_1\in E_1,x_2\in E_2
\end{equation*} 
is called the adjoint of $T$. The set of \emph{everywhere defined} weakly continuous linear operators $T:E\to F$ is denoted by $\lef$. Just like in the case of Hilbert spaces, every operator $T\in\lef$ is uniquely determined by its quadratic form $x\mapsto\dual{Tx}{x}.$

As it plays an important role in the background, we recall the construction of the Krein-von Neumann extension of a positive operator. For more details see \cite[Theorem 3.1]{KV}. Assume that $\dual{F}{E}$ is a $w^*$-sequentially complete anti-dual pair and  $A:\mathcal{D}\to F$ is a linear operator such that
for any $y$ in $E$ there is $M_y\geq0$ satisfying 
\begin{equation}\label{E:M_y}
 \abs{\dual{Ax}{y}}^2\leq M_y\dual{Ax}{x}\qquad \textrm{for all $x\in\mathcal{D}$.}
\end{equation}
This assumption guarantees that one can build a Hilbert space $\hil_{A}$ by taking the Hilbert space completion of the inner product space $\big(\ran A,\sipa{\cdot}{\cdot}\big)$, where
\begin{equation}
\sipa{{A}x}{{A}x'}:=\dual{{A}x}{x'},\qquad x,x'\in\dom {A}.
\end{equation}
Again, by \eqref{E:M_y}, the canonical embedding operator 
\begin{equation}\label{E:J0}
    J_0:H_A\supseteq\ran A\to F,\qquad J_0({A}x):={A}x
\end{equation}
is weakly continuous, and thus admits a unique continuous extension $J$ to $\hila$ by weak-$^*$ sequentially completeness. Since $J\in\mathscr{L}(\hil_{A} ;F)$, its adjoint $J^*$ belongs to $\mathscr{L}(E ;\hil_{A})$, and $J^*x={A}x$ for all $x\in\dom A$. As for any $x\in\dom {A}$ we have $JJ^*x=J({A}x)={A}x$, 
the operator $JJ^*\in\lef$ is a positive extension of $A$. We will refer to $A_N:=JJ^*$ as the Krein-von Neumann extension of $A$. For $A_N$ we obtained the following formulae (see (3.6) and (3.7) in \cite{KV}) 
\begin{align}\label{L:JJquadratic}
\dual{JJ^*y}{y}&=\sup\set[\big]{\abs{\dual{Ax}{y}}^2}{x\in\dom A, \dual{Ax}{x}\leq 1}\\
               &=\sup\set[\big]{\dual{Ax}{y}+\overline{\dual{Ax}{y}}-\dual{Ax}{x}}{x\in\dom A}
\label{L:JJquadratic2}
\end{align}
\subsection{Generalized Schur complement}\label{Ss:Schur}

Using the technique introduced in the previous subsection, we can extend the corresponding results of Pekarev and Smul'jan \cite{Pekarev}. Namely, for given operators $A,B,C\in \lef$ we consider the \emph{incomplete operator matrix} (or shortly \emph{system}) $\kismatrix{A}{B}{C}{*}$. Such a system is called positive if there exists a  $D\in\lef$ such that $\kismatrix{A}{B}{C}{D}$ is positive. Clearly, for an incomplete operator matrix to be positive it is necessary that $A\geq0$ and that $C=B^*$, furthermore  every $D$ making  $\kismatrix{A}{B}{C}{*}$ positive  must be positive itself. We may therefore restrict ourselves to investigate incomplete operator matrices of the form $\kismatrix{A}{B}{B^*}{*}$,  where $A\geq 0$. The next theorem provides necessary and sufficient conditions for positivity.

\begin{theorem}\label{T:Schur}
Let $\dual{F_1}{E_1}$ and $\dual{F_2}{E_2}$ be weak-$^*$ sequentially complete anti-dual pairs and let  $A\in \mathscr{L}(E_1;F_1)$ and $B\in\mathscr{L}(E_1;F_2)$ be weakly continuous linear operators such that $A\geq0$. Then the following assertions are equivalent: 
\begin{enumerate}[label=\textup{(\roman*)}]
 \item There is a positive operator $C\in\mathscr{L}(E_2;F_2)$ such that the operator matrix $\kismatrix{A}{B^*}{B}{C}$ is positive.
\item For every $y_2\in E_2$ there exists a constant $M_{y_2}\geq0 $ such that
 \begin{align*}
 \abs{\dual{Bx_1}{y_2}}^2\leq M_{y^{}_2}\cdot \dual{Ax_1}{x_1}\qquad \textrm{for all $x_1\in E_1$.}
\end{align*}
\item   For the canonical embedding operator $J:\hila\to F_1$ constructed in \eqref{E:J0}
the following range inclusion holds
\begin{align*}\ran B^*\subseteq \ran J.
\end{align*}
\end{enumerate}
If any of the above conditions is fulfilled, then the linear operator 
\begin{equation}\label{E:S_0-def}
S_0:\hila\supseteq \ran A\to F_2;\qquad  Ax_1\mapsto Bx_1,\qquad x_1\in E_1
\end{equation}
is well defined and weakly continuous. Furthermore, its unique continuous extension $S\in \mathscr{L}(\hila; F_2)$ possesses the property that 
\begin{equation}\label{Schurdef}
A^{}_B\coloneqq SS^*
\end{equation}
is the smallest positive operator that makes $\kismatrix{A}{B^*}{B}{*}$ positive. The quadratic form of $A^{}_B$ is given by  
\begin{align}\label{C:Cor32}
\dual{SS^*y_2}{y_2}&=\sup\set[\big]{\abs{\dual{Bx_1}{y_2}}^2 }{x_1\in E_1, \dual{Ax_1}{x_1}\leq 1}\\
               &=\sup\set[\big]{\dual{Bx_1}{y_2}+\dual{B^*y_2}{x_1}-\dual{Ax_1}{x_1} }{x_1\in E_1}.\label{2cof}
\end{align}
\end{theorem}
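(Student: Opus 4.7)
The plan is to prove the three conditions equivalent by the cycle (i)$\Rightarrow$(ii)$\Rightarrow$(iii)$\Rightarrow$(i), and then to establish both the explicit formulas for $A^{}_B = SS^*$ and its minimality. The whole argument is organized around the Krein--von Neumann machinery of Subsection \ref{Ss:Krein}: the operator $S$ plays here the role that $J$ plays there, and in fact $B = SJ^*$ will be the central factorization.

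For (i)$\Rightarrow$(ii), I would test the positivity of $\kismatrix{A}{B^*}{B}{C}$ on vectors of the form $(tx_1, \lambda y_2)$ with $t\in\dupR$ and $\lambda\in\dupC$ of unit modulus chosen to align the phase of $\dual{Bx_1}{y_2}$; optimizing in $t$ yields the Cauchy--Schwarz bound $|\dual{Bx_1}{y_2}|^2 \leq \dual{Ax_1}{x_1}\,\dual{Cy_2}{y_2}$, so $M_{y_2}:=\dual{Cy_2}{y_2}$ works. For (ii)$\Rightarrow$(iii), the hypothesis first shows that $Ax_1 = Ax_1'$ forces $Bx_1 = Bx_1'$, so $S_0$ in \eqref{E:S_0-def} is well defined; it also says that for each fixed $y_2$ the functional $Ax_1 \mapsto \dual{Bx_1}{y_2}$ is $\|\cdot\|_A$-continuous on $\ran A$, which makes $S_0$ weakly continuous into $F_2$. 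By weak-$*$ sequential completeness of $\dual{F_2}{E_2}$ it extends uniquely to $S \in \mathscr{L}(\hila; F_2)$. Since $J^*x_1 = Ax_1$, this gives the factorization $B = SJ^*$, hence $B^* = JS^*$ and $\ran B^* \subseteq \ran J$. For (iii)$\Rightarrow$(i), once the factorization $B = SJ^*$ is in hand (obtained either by retracing (iii)$\Rightarrow$(ii) or directly by lifting $B^*$ through $J$), I would verify the positivity of $\kismatrix{A}{B^*}{B}{SS^*}$ via the operator factorization $\kismatrix{A}{B^*}{B}{SS^*} = \kismatrix{J}{0}{S}{0}\kismatrix{J^*}{S^*}{0}{0}$, which at the level of quadratic forms reads
\begin{equation*}
 \dual{Ax_1}{x_1} + \dual{B^*y_2}{x_1} + \dual{Bx_1}{y_2} + \dual{SS^*y_2}{y_2} = \|J^*x_1 + S^*y_2\|_A^2 \geq 0.
\end{equation*}

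Minimality and the two formulas then fall out together. For any positive completion $C$, the displayed identity applied with $-x_1$ in place of $x_1$ gives $\dual{Cy_2}{y_2} \geq \dual{Bx_1}{y_2} + \dual{B^*y_2}{x_1} - \dual{Ax_1}{x_1}$, and taking the supremum over $x_1 \in E_1$ yields simultaneously \eqref{2cof} and the minimality $C \geq SS^*$, provided this supremum is identified with $\|S^*y_2\|_A^2 = \dual{SS^*y_2}{y_2}$. This identification rests on the Hilbert-space identity $\|u\|^2 = \sup_v(2\operatorname{Re}(v|u)_A - \|v\|_A^2)$ applied with $u = S^*y_2$ and $v$ ranging over the dense subspace $\ran A \subseteq \hila$, combined with the adjoint relation $(Ax_1 \mid S^*y_2)_A = \dual{Bx_1}{y_2}$; formula \eqref{C:Cor32} is the corresponding Riesz-type expression $\|S^*y_2\|_A = \sup_{\|Ax_1\|_A \leq 1} |(Ax_1 \mid S^*y_2)_A|$. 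The only real subtlety I anticipate is bookkeeping with the adjoint conventions for operators into and out of $\hila$ in the anti-dual pair setting, in particular pinning down the relation $(Ax_1 \mid S^*y_2)_A = \dual{Bx_1}{y_2}$ that drives both formulas; the rest is a routine adaptation of the Krein--von Neumann argument recalled in Subsection \ref{Ss:Krein}.
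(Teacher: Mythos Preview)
Your argument is correct but takes a genuinely different route from the paper's. The paper recasts the completion problem as a Krein--von Neumann extension problem: it defines $T_0\kispair{x_1}{0}=\kispair{Ax_1}{Bx_1}$ on $E_1\times\{0\}$, checks condition \eqref{E:M_y}, invokes \cite[Theorem 3.1]{KV} to obtain the minimal extension $T_N=\kismatrix{A}{B^*}{B}{C_N}$, and only afterwards identifies $C_N$ with $SS^*$ by comparing quadratic forms via \eqref{L:JJquadratic} and \eqref{L:JJquadratic2}. You instead build $S$ first, obtain the factorization $B=SJ^*$, and read off positivity of $\kismatrix{A}{B^*}{B}{SS^*}$ directly from the sum-of-squares identity $\dual{Ax_1}{x_1}+2\operatorname{Re}\dual{Bx_1}{y_2}+\dual{SS^*y_2}{y_2}=\|J^*x_1+S^*y_2\|_A^2$; minimality and both formulas then follow from the elementary Hilbert-space variational identities for $\|S^*y_2\|_A^2$ together with density of $\ran A$ in $\hila$. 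Your approach is more self-contained (it does not call the external extension theorem) and makes the factorization $\kismatrix{A}{B^*}{B}{SS^*}=\kispair{J}{S}\kispair{J}{S}^{\!*}$ transparent; the paper's approach, on the other hand, highlights the conceptual point---central to the article---that the Schur complement \emph{is} a Krein--von Neumann extension, so that formulas \eqref{C:Cor32}--\eqref{2cof} are literally instances of \eqref{L:JJquadratic}--\eqref{L:JJquadratic2}. One small remark: in your cycle, the step (iii)$\Rightarrow$(i) tacitly uses $S$, which you constructed under hypothesis (ii); your parenthetical ``retracing (iii)$\Rightarrow$(ii)'' is the right fix, and that retracing is immediate since $B^*y_2=Jh$ gives $\abs{\dual{Bx_1}{y_2}}=\abs{\sipa{Ax_1}{h}}\leq\|h\|_A\,\dual{Ax_1}{x_1}^{1/2}$.
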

\begin{proof}
We use the column vector notation $\kispair{x_1}{x_2}$ instead of $(x_1,x_2)$. 

(i)$\Rightarrow$(ii): 
 Take $y_2$ from $E_2$. By the Cauchy-Schwarz inequality we have 
\begin{align*}
\abs{\dual{Bx_1}{y_2}}^2&=\abs[\big]{\dual[\big]{\kismatrix{A}{B^*}{B}{C}\kispair{x_1}{0^{}_{}}}{\kispair{0}{y_2}}}^2\\ 
&\leq \dual[\big]{\kismatrix{A}{B^*}{B}{C}\kispair{0}{y_2}}{\kispair{0}{y_2}}\dual[\big]{\kismatrix{A}{B^*}{B}{C}\kispair{x_1}{0^{}_{}}}{\kispair{x_1}{0^{}_{}}}\\
&=\dual[\big]{\kismatrix{A}{B^*}{B}{C}\kispair{0}{y_2}}{\kispair{0}{y_2}}\dual{Ax_1}{x_1}
\end{align*}
 for all $x_1$ in $E_1$. Hence (i) implies (ii).
 
 (ii)$\Rightarrow$ (iii): Consider the auxiliary Hilbert space $\hila$ and for a fixed vector $y_2\in E_2$ and define the linear functional
 \begin{equation*}
    \phi:\hila\supseteq \ran A\to \dupC;\qquad \phi(Ax_1)\coloneqq \dual{Bx_1}{y_2},\qquad x_1\in E_1.
 \end{equation*}
 From (ii) we  infer that $\phi$ is bounded and therefore the Riesz representation theorem implies that there exists $h\in\hila$ such that 
 \begin{equation*}
     \sipa{Ax_1}{h}=\dual{Bx_1}{y_2},\qquad x_1\in E_1.
 \end{equation*}
 Denote by $J$ the canonical embedding operator \eqref{E:J0} of $\hila$ into $F_1$. Since we have $J^*x_1=Ax_1$, it follows that 
 \begin{equation*}
     \dual{Jh}{x_1}=\sipa{h}{Ax_1}=\overline{\dual{Bx_1}{y_2}}=\dual{B^*y_2}{x_1}
 \end{equation*}
 for every $x_1\in E_1$. Consequently, $B^*y_2=Jh\in\ran J$ which proves (iii).
 
 (iii)$\Rightarrow$(i): We are going to show that the operator $T_0:E_1\times \{0\}\to F_1\times F_2,$
 \begin{equation}\label{E:M0-def}
      T_0\kispair{x_1}{0}\coloneqq \kispair{Ax_1}{Bx_1} 
 \end{equation}
 has a positive extension $T\in\mathscr{L}(E_1\times E_2;F_1\times F_2)$. Since every such extension can be written as a matrix of the form $T=\kismatrix{A}{B^*}{B}{C}$ for some $C\in\mathscr{L}(E_2;F_2)$, $C\geq0,$ this will entail the desired implication.
 
 The anti-dual pair $\dual{F_1\times F_2}{E_1\times E_2}$ is obviously weak-* sequentially complete, so our only duty is the verify that $T_0$ satisfies condition 
 \eqref{E:M_y}, as it guarantees the existence of the Krein-von Neumann extension according to \cite[Theorem 3.1]{KV}.
To this end, fix $\kispair{y_1}{y_2}\in E_1\times E_2$, then 
\begin{align*}
\abs{\dual{T_0\kispair{x_1}{0}}{\kispair{y_1}{y_2}}}^2&=\abs{\dual{\kispair{Ax_1}{Bx_1}}{\kispair{y_1}{y_2}}}^2\\&=\abs{\dual{Ax_1}{y_1}+\dual{Bx_1}{y_2}}^2\\
&\leq 2\abs{\dual{Ax_1}{y_1}}^2+2\abs{\dual{Bx_1}{y_2}}^2\\
&\leq 2\dual{Ax_1}{x_1}\dual{Ay_1}{y_1}+2M_{y_2}\dual{Ax_1}{x_1}\\
&=M'_{y_1,y_2}\cdot \dual{\kispair{Ax_1}{Bx_1}}{\kispair{x_1}{0^{}_{}}}=M'_{y_1,y_2}\cdot \dual{T_0\kispair{x_1}{0}}{\kispair{x_1}{0}}
\end{align*}
for all $\kispair{x_1}{0}$ in $\dom T_0$.
To prove the rest of our statement observe first that inequality (ii) can be written in the form 
\begin{equation*}
    \abs{\dual{Bx_1}{y_2}}^2\leq M_{y_2}\cdot \sipa{Ax_1}{Ax_1},\qquad x_1\in E, y_2\in E_2.
\end{equation*}
This inequality says that for every fixed $y_2\in E_2$, the linear functional 
\begin{equation*}
    Ax_1\mapsto \dual{Bx_1}{y_2}
\end{equation*}
from $\ran A\subseteq \hila$ into $\dupC$ is continuous. This implies  that $S_0$ of \eqref{E:S_0-def} is well defined and weakly continuous. Let $T_N$ denote the smallest (Krein-von Neumann) extension of $T_0$ in \eqref{E:M0-def}. If we write $T_N$ in the matrix form $T_N=\kismatrix{A}{B^*}{B}{C_N}$,   $C_N\in \mathscr L(E_2;F_2)$ is evidently the smallest positive operator that makes $\kismatrix{A}{B^*}{B}{*}$ positive. It is therefore enough to show that $C_N=SS^*$.  To this aim, take $y_2\in E_2$.
From \eqref{L:JJquadratic} it follows that
\begin{align*}
\dual{C_Ny_2}{y_2}&=\dual{T_N\kispair{0}{y_2}}{\kispair{0}{y_2}}\\
 &=\sup\set[\big]{\abs{\dual{\kispair{Ax_1}{Bx_1}}{\kispair{0}{y_2}}}^2 }{x_1\in E_1, \dual{\kispair{Ax_1}{Bx_1}}{\kispair{x_1}{0^{}_{}}}\leq 1}\\
 &=\sup\set[\big]{\abs{\dual{Bx_1}{y_2}}^2 }{x_1\in E_1, \dual{Ax_1}{x_1}\leq 1}.
\end{align*}
On the other hand,
\begin{align*}
\dual{SS^*y_2}{y_2}&=\sipa{S^*y_2}{S^*y_2}\\
&=\sup\set[\big]{\abs{\sipa{Ax_1}{S^*y_2}}^2}{x_1\in E_1, \sipa{Ax_1}{Ax_1}\leq 1}\\
&=\sup\set[\big]{\abs{\dual{S_0(Ax_1)}{y_2}}^2}{x_1\in E_1, \dual{Ax_1}{x_1}\leq 1}\\
&=\sup\set[\big]{\abs{\dual{Bx_1}{y_2}}^2}{x_1\in E_1, \dual{Ax_1}{x_1}\leq 1}
\end{align*}
Thus we conclude that the quadratic forms of  $SS^*$ and $C_N$ are identical, so $SS^*=C_N$. Finally, applying  \eqref{L:JJquadratic2} we obtain that 
\begin{align*}
\dual{C_Ny}{y}&=\sup\set[\big]{\dual{\kispair{Ax_1}{Bx_1}}{\kispair{0}{y_2}}+\overline{\dual{\kispair{Ax_1}{Bx_1}}{\kispair{0}{y_2}}}-\dual{\kispair{Ax_1}{Bx_1}}{\kispair{x_1}{0^{}_{}}}}{x_1\in E_1}\\
&=\sup\set[\big]{\dual{Bx_1}{y_2}+\dual{B^*y_2}{x_1}-\dual{Ax_1}{x_1} }{x_1\in E_1}.
\end{align*}
The proof is complete.
\end{proof}

Now we are able to introduce the notion of complement in this setting.
\begin{definition}
We refer to the operator $A_B$ in \eqref{Schurdef} as the complement of $A$ with respect to $B$. If $C\in\mathscr{L}(E_2;F_2)$ is any positive operator that makes the system $\kismatrix{A}{B^*}{B}{C}$ positive, then $C-A_B$ is called the Schur complement of $C$ in the block matrix $\kismatrix{A}{B^*}{B}{C}$.
\end{definition}

In the sequel we work with the complement only in the special case when $A,B\in\lef$ are both positive (and hence self-adjoint) operators on $\dual{F}{E}$. In that case the quadratic form of $A_B$ can be calculated as
 \begin{equation}\label{E:cofform}
\dual{A_By}{y}=\sup\set[\big]{\dual{By}{x}+\dual{Bx}{y}-\dual{Ax}{x} }{x\in E}.
\end{equation}

As a straightforward consequence of Theorem \ref{T:Schur} we retrieve the classical result  \cite[$\S$1 Condition 2' and Theorem 1.1]{Pekarev} of Pekarev and \v{S}mul'jan.

\begin{corollary}
Let $\hil$ be a Hilbert space and let $A,B\in\bh$ be bounded operators. Assume further that $A$ is positive. Then the following assertions are equivalent:
\begin{enumerate}[label=\textup{(\roman*)}]
 \item There exists a $C\in\bh$ such that $\kismatrix{A}{B^*}{B}{C}$ is positive,
 \item  $B^*B\leq mA$ for some constant $m\geq0$,
 \item $\ran B^*\subseteq \ran A^{1/2}$.
\end{enumerate}
\end{corollary}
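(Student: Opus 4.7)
The plan is to specialize Theorem~\ref{T:Schur} to the anti-dual pair $\dual{\hil}{\hil}$, with the Hilbert inner product as the sesquilinear pairing. In this setting $\mathscr{L}(\hil;\hil)$ coincides with $\bh$ and condition (i) of the corollary is literally condition (i) of the theorem. The whole argument then reduces to (a) rewriting condition (iii) of Theorem~\ref{T:Schur}, namely $\ran B^{*}\subseteq\ran J$, in the form $\ran B^{*}\subseteq\ran A^{1/2}$; and (b) proving the equivalence of this range inclusion with the majorization $B^{*}B\leq mA$.

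For step (a) I would verify that the canonical embedding $J\colon\hila\to\hil$ from \eqref{E:J0} reproduces the classical square-root factorization of $A$. The formula $J^{*}y=Ay$ for every $y\in\hil$ is built into the construction of $\hila$, so $JJ^{*}y=J(Ay)=Ay$ and hence $JJ^{*}=A$. Writing $A=A^{1/2}(A^{1/2})^{*}$ and invoking Douglas' range inclusion theorem, one concludes $\ran J=\ran A^{1/2}$, which turns Theorem~\ref{T:Schur}~(iii) into corollary's (iii).

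Step (b) is handled by the majorization form of Douglas' theorem: for bounded operators $C,D$ on a Hilbert space, $\ran C\subseteq\ran D$ if and only if $CC^{*}\leq\lambda DD^{*}$ for some $\lambda\geq0$. Applied to $C=B^{*}$ and $D=A^{1/2}$ this yields at once $\ran B^{*}\subseteq\ran A^{1/2}\Leftrightarrow B^{*}B\leq mA$. There is no genuine obstacle; the proof is essentially bookkeeping that converts the abstract range inclusion of Theorem~\ref{T:Schur} into its concrete Hilbert-space incarnations via Douglas' lemma, at which point the implication $\text{(ii)}\Rightarrow\text{(i)}$ (not immediate from the theorem's version of (ii), whose constant $M_{y_2}$ is allowed to depend on $y_2$) is automatically recovered through the chain $\text{(ii)}\Leftrightarrow\text{(iii)}\Leftrightarrow\text{(i)}$.
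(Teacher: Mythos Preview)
Your proposal is correct and follows precisely the route the paper intends: the paper states the corollary as ``a straightforward consequence of Theorem~\ref{T:Schur}'' and gives no proof at all, so you are supplying the omitted details. Your identification $\ran J=\ran A^{1/2}$ via $JJ^{*}=A$ and Douglas' lemma, together with the Douglas equivalence $\ran B^{*}\subseteq\ran A^{1/2}\Leftrightarrow B^{*}B\leq mA$, is exactly the bookkeeping needed, and your observation that the uniform constant in the corollary's (ii) is recovered through the chain $\text{(ii)}\Leftrightarrow\text{(iii)}\Leftrightarrow\text{(i)}$ rather than directly from the theorem's pointwise (ii) is the right way to close the loop.
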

Before proceeding with direct applications let us make first two short comments. On the one hand, if we consider a sesquilinear form $\tfrom:E\times E\to\mathbb{C}$ on a given complex vector space $E$, then we may associate a linear operator $A_{\tform}$ acting between $E$ and its algebraic anti-dual $\bar{E}'$ by setting
\begin{align}\label{E:opform}
\dual{A_{\tform}x}{y}:=\tform(x,y),\qquad x,y\in E.
\end{align}
It is clear now that $A_{\tform}$ is automatically weakly continuous that is positive (resp., selfadjoint) if and only if $\tform$ is nonnegative semidefinite (resp., Hermitian). On the other hand, if an anti-dual pair $\dual FE$ and a positive operator $A\in\lef$ is given,  we can associate a hermitian form on $E$ by
\begin{align}\label{E:opformvissza}
\tform_A(x,y):=\dual{Ax}{y},\qquad x,y\in E.
\end{align}
Therefore, using the above theorem, we can define the complement of sesquilinear forms. This notion (together with the parallel sum and parallel difference) was introduced and studied by Hassi, Sebesty\'en, and de Snoo in \cite{HSdS} and \cite{HSdS2}.
\subsection{Parallel sum, parallel difference, and Lebesgue decomposition}\label{Ss:PsPdLeb}

As an application of Theroem \ref{T:Schur}, we introduce first the notion of parallel sum in context of positive operators on an anti-dual pair. Consider the incomplete operator matrix $\kismatrix{A+B}{A}{A}{*}$ and observe immediately that it is positive because $\kismatrix{A+B}{A}{A}{A}$  is positive. In particular, the corresponding complement $(A+B)_A$ satisfies $(A+B)_A\leq A$ and its quadratic form can be calculated due to \eqref{E:cofform} as follows 
\begin{align*}
\dual{{(A+B)}^{}_{A}y}{y}&=\sup\set{\dual{Ax}{y} +\dual{Ay}{x}-\dual{(A+B)x}{x}}{x\in E}\\
                  &=-\inf\set{\dual{Ax}{y} +\dual{Ay}{x}+\dual{Ax}{x}+\dual{Bx}{x}}{x\in E}\\
                  &=\dual{Ay}{y}-\inf\set{\dual{A(y+x)}{y+x}+\dual{Bx}{x}}{x\in E}.
\end{align*}
We gain therefore an equivalent definition of the parallel sum introduced first in this setting in \cite[Theorem 4.1]{Tarcsay-ADP Lebdec}.
\begin{definition} Assume that $A,B\in\lef$ are positive operators. Then the operator \begin{equation}
    A:B:=A-(A+B)_A
\end{equation} called \emph{the parallel sum} of $A$ and $B$ is positive, and it satisfies
\begin{equation}\label{E:A:B}
\dual{(A:B)y}{y}=\inf\set{\dual{A(y+x)}{y+x}+\dual{Bx}{x}}{x\in E}.
\end{equation}
\end{definition}
Note that $A:B$ is the Schur complement of $A$ in the block matrix $\kismatrix{A+B}{A}{A}{A}$ by definition.
Observe also that $A:B=B:A$ which is not clear from the definition but can easily deduced  from  formula \eqref{E:A:B}. 

Consider now the system $\kismatrix{A-B}{A}{A}{*}$. In order to be positive it is necessary (but nut sufficient) that  $A-B\geq0$. By Theorem \ref{T:Schur}, positivity of $\kismatrix{A-B}{A}{A}{*}$ is equivalent with the following condition: for any $y\in E$ there is $M_y\geq0$ such that 
 \begin{equation}\label{E:A-BA}
 \abs{\dual{Ax}{y}}^2\leq M_y\dual{(A-B)x}{x}\qquad x\in E.
 \end{equation}
If this holds true, then the complement $(A-B)_A$ exists and its quadratic form is calculated as follows: 
\begin{align*}%\label{F:(A-B)_A}
\dual{(A-B)_Ay}{y}&=\sup\set{\dual{Ax}{y}+\dual{Ay}{x}-\dual{(A-B)x}{x}}{x\in E}\\
                  &=\dual{Ay}{y}+\sup\set{\dual{Bx}{x}-\dual{A(x+y)}{x+y}}{x\in E}\\
                  &=\dual{Ay}{y}+\sup\set{\dual{B(x+y)}{x+y}-\dual{Ax}{x}}{x\in E}
\end{align*}
Observe that the assumption $A\geq B$ is not sufficient to guarantee the existence of $(A-B)_A$. Indeed, the supremum above can be infinity if for example $A=B$. To see this, substitute $x=\lambda y$ for any $y\in E$ satisfying $\dual{Ay}{y}\neq0$.
\begin{definition} Assume that $A,B\in\lef$ are operators such that $(A-B)_A$ does exist. Then the operator defined by 
\begin{equation}
    B \div A:=(A-B)_A-A
\end{equation}
belongs to $\lef$ and is called the \emph{parallel difference} of $B$ and $A$. The quadratic form of $B\div A$ can be calculated as
\begin{equation}\label{E:BkulA}
\dual{(B\div A) y}{y}=\sup\set{\dual{B(x+y)}{x+y}-\dual{Ax}{x}}{x\in E}.
\end{equation}
\end{definition}

%The importance of \eqref{E:A:B} and \eqref{E:BkulA} will be clear once we combine these formulae with \eqref{E:opformvissza}. It turns out that the Lebesgue decomposition developed in \cite{Tarcsay-ADP Lebdec} can be treated in an alternative way. 
Now we are going to apply the above results to retrieve the Lebesgue decomposition developed in \cite{Tarcsay-ADP Lebdec} in an alternative way, namely, by means of the parallel addition and subtraction.  
To do so we recall first what a Lebesgue decomposition is. For more details, see \cite{Tarcsay-ADP Lebdec}.
\begin{definition}\label{D:LD}
Let $A,B\in\lef$ be positive operators. We say that $B$ is \emph{$A$-absolute continuous} ($A\ll B$) if for any sequence $\seq{x}$ of $E$,
 \begin{equation*}
    \dual{Ax_n}{x_n}\to0\qquad \textrm{and}\qquad \dual{B(x_n-x_m)}{x_n-x_m}\to0\qquad (n,m\to+\infty)
 \end{equation*}
 imply $\dual{Bx_n}{x_n}\to0$. We say that $A$ and $B$ are \emph{mutually singular} ($A\perp B$) if $C\leq A$ and $C\leq B$ imply $C=0$ for any positive operator $C\in\lef$. A decomposition $A=A_1+A_2$ of $A$ is called a \emph{Lebesgue-type decomposition} of $A$ with respect to $B$ if $A_1\ll B$ and $A_2\perp B$.
\end{definition}
%As it is known, the notion of $B$-absolute continuity (which is sometimes called $B$-closability) is equivalent with almost dominatedness by $B$. 
It was proved in \cite[Theorem 4.6]{Tarcsay-ADP Lebdec} that the pointwise limit $A_r\coloneqq \limn A:nB$ is $B$-absolutely continuous  and $A_s\coloneqq A-A_r$ is $B$-singular. Thus $A=A_r+A_s$ is a Lebesgue decomposition of $A$ with respect to $B$. Furthermore, according to \cite[Theorem 5.1]{Tarcsay-ADP Lebdec}, the positive operator $A$ is $B$-absolutely continuous if and only if $A=\limn (A:nB)$.

The key results we are going to take advantage of are \cite[Theorem 3.5]{HSdS} and \cite[Theorem 3.9]{HSdS}. The combination of the two statements (with the notation of \eqref{E:opformvissza}) says that the limit of the weighted parallel sums can be formulated by means of the parallel sum and the parallel difference
\begin{equation}\lim\limits_{n\to\infty}\tform_A:n\tform_B=(\tform_A:\tform_B)\div\tform_B.\end{equation}
Using  \eqref{E:A:B} and \eqref{E:BkulA}, this implies that
\begin{equation}\label{(A:B)-B}
\lim\limits_{n\to\infty}A:nB=(A:B)\div B.
\end{equation}
According to \cite[Theorem 4.6]{Tarcsay-ADP Lebdec}, the left hand side of \eqref{(A:B)-B} is the distinguished $B$-absolute continuous part of $A$, and thus we obtained the following form of the Lebesgue decomposition theorem.
\begin{theorem}\label{T:ADP-Lebdec}
Let $\dual FE$ be a weak-$^*$ sequentially complete anti-dual pair. Assume that $A$ and $B$ are positive operators belonging to $\lef$. Then the operator
$A_r:=(A:B)\div B$
belongs to $\lef$, and the decomposition
\begin{equation*}
A=A_r+(A-A_r)
\end{equation*}
is a Lebesgue decomposition of $A$ with respect to $B$. Furthermore,
\begin{equation}\label{E:Areg}
    A_r=\limn A:nB.
\end{equation}
\end{theorem}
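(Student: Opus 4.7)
The statement is essentially a direct synthesis of two results already assembled just before it: identity \eqref{(A:B)-B}, which asserts $\limn A:nB=(A:B)\div B$, and \cite[Theorem 4.6]{Tarcsay-ADP Lebdec}, according to which the pointwise limit $\limn A:nB$ exists as a positive operator in $\lef$, is $B$-absolutely continuous, and has the property that $A$ minus this limit is $B$-singular. My plan is simply to stitch the two together, in that order.

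First I would invoke \cite[Theorem 4.6]{Tarcsay-ADP Lebdec} to record that $A'_r:=\limn A:nB$ exists as a well-defined positive element of $\lef$. Reading \eqref{(A:B)-B} from right to left, this guarantees in particular that the parallel difference $(A:B)\div B$ is legitimately defined---that is, that the supremum appearing in \eqref{E:BkulA}, taken with $A:B$ and $B$ playing the roles of $B$ and $A$, is finite at every $y\in E$---and that it coincides with $A'_r$. This immediately produces formula \eqref{E:Areg} together with the assertion $A_r\in\lef$.

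Next, setting $A_s:=A-A_r$, the very same \cite[Theorem 4.6]{Tarcsay-ADP Lebdec} delivers $A_r\ll B$ and $A_s\perp B$ in the sense of Definition \ref{D:LD}, so that $A=A_r+A_s$ is a Lebesgue-type decomposition of $A$ with respect to $B$. Nothing else needs to be verified.

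The only point that requires a moment's attention is the transfer from sesquilinear forms to operators that underlies \eqref{(A:B)-B}: one needs to check that \cite[Theorems 3.5 and 3.9]{HSdS}, stated there for forms, transport faithfully through the correspondence \eqref{E:opform}--\eqref{E:opformvissza}, and that the operator-level parallel sum and parallel difference introduced above really do match their form-level counterparts from \cite{HSdS}. Since $\tform_A$ is nonnegative exactly when $A\geq 0$ and since the operator versions of $A:B$ and $B\div A$ were tailored precisely so that their quadratic forms \eqref{E:A:B} and \eqref{E:BkulA} agree with those in \cite{HSdS}, this step is essentially notational bookkeeping rather than a genuine obstacle, and with it in hand the theorem is complete.
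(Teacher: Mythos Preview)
Your proposal is correct and matches the paper's approach exactly: the paper does not give a separate formal proof of this theorem but simply observes, in the sentence preceding the statement, that \eqref{(A:B)-B} together with \cite[Theorem 4.6]{Tarcsay-ADP Lebdec} yields the result. Your write-up spells out the same two ingredients in the same order, with only the added care of noting that \eqref{(A:B)-B} also certifies the well-definedness of $(A:B)\div B$ and that the form--operator correspondence \eqref{E:opform}--\eqref{E:opformvissza} is compatible with the Hassi--Sebesty\'en--de Snoo results.
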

We conclude this section with a corollary which states that $A_r$  can be expressed by means of the complement and the parallel sum.
\begin{corollary}\label{C:Nishio}
Let $\dual FE$ be a weak-$^*$ sequentially complete anti-dual pair. Assume that $A$ and $B$ are positive operators belonging to $\lef$. Then 
\begin{equation}
    A_r=(B-B:A)_{B}-B.
\end{equation}
\end{corollary}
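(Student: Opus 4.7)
The plan is essentially to unwind the definitions and invoke Theorem \ref{T:ADP-Lebdec}. By that theorem we already know
\[A_r = (A\!:\!B)\div B,\]
so the task reduces to rewriting the right-hand side of this identity in terms of the complement operation and to identifying it with $(B - B\!:\!A)_B - B$.

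First I would apply the definition of the parallel difference directly. Recall that $X\div Y := (Y-X)_Y - Y$ whenever the complement $(Y-X)_Y$ exists. Setting $X := A\!:\!B$ and $Y := B$ gives
\[(A\!:\!B)\div B \;=\; \bigl(B - (A\!:\!B)\bigr)_B - B.\]
Second, I would invoke the symmetry of the parallel sum, $A\!:\!B = B\!:\!A$, which is immediate from the symmetric expression \eqref{E:A:B}. Substituting yields
\[A_r \;=\; \bigl(B - B\!:\!A\bigr)_B - B,\]
which is exactly the claimed identity.

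The only point that needs a word of justification is that the complement $(B - B\!:\!A)_B$ actually exists, so that the parallel difference $(A\!:\!B)\div B$ is well defined in $\lef$. This, however, is guaranteed by Theorem \ref{T:ADP-Lebdec}, which asserts that $A_r=(A\!:\!B)\div B$ belongs to $\lef$; equivalently, the incomplete system $\kismatrix{B-(A:B)}{B}{B}{*}$ is positive and hence the condition (ii) of Theorem \ref{T:Schur} holds for the pair $(B-(A\!:\!B),B)$. Therefore no additional verification is needed and the two-line computation above completes the proof.

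Since the argument is essentially a tautology once the previous results are in place, there is no substantial obstacle; the only care required is to keep the roles of the two operators in the parallel difference straight (the subtrahend determines both the ``minuend inside the complement'' and the subscript of the complement) and to exploit the symmetry $A\!:\!B = B\!:\!A$ at the right moment.
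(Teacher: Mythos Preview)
Your proof is correct and follows the same logical route as the paper's: both reduce the claim to Theorem~\ref{T:ADP-Lebdec} together with the definitional identity $(A\!:\!B)\div B=(B-A\!:\!B)_B-B$ and the symmetry $A\!:\!B=B\!:\!A$. The only difference is cosmetic: the paper restates this identity as $C_B=B+(B-C)\div B$ and verifies it by a quadratic-form computation (using the polarization-type identity for $\dual{Bx}{y}+\dual{By}{x}$) before substituting $C=B-A\!:\!B$, whereas you simply read it off from the definition of the parallel difference---which is entirely legitimate, since that identity is nothing but the definition rearranged.
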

\begin{proof}
First observe that if $C$ and $A$ are positive operators such that $C_B$ does exits, then it can be written as
\begin{equation}\label{E:compalt}
C_B=B+(B-C)\div B.
\end{equation}
Indeed, using that $\dual{Bx}{y}+\dual{By}{x}=\dual{Bx}{x}+\dual{By}{y}-\dual{B(x-y)}{(x-y)}$, an elementary calculation shows that
\begin{align*}
    \dual{C_Bx}{x}&=\sup\set{\dual{Bx}{y}+\dual{By}{x}-\dual{Cy}{y}}{y\in E}\\
    &=\dual{Bx}{x}+\sup\set{\dual{(B-C)y}{y}-\dual{B(x-y)}{x-y}}{y\in E}\\
    &=\dual{(B+(B-C)\div B)x}{x}.
\end{align*}
Substituting $C=B-A:B$ in \eqref{E:compalt}, we get
\begin{equation*}
    (B-A:B)_B=B+(A:B)\div B.
\end{equation*}
After rearrangement we conclude that $(B-A:B)_B-B=(A:B)\div B=A_r.$
\end{proof}
\section{Applications}\label{S:Appl}

\subsection{Operators on rigged Hilbert spaces}\label{Ss:rigged}
Let $\D$ be a dense linear subspace of a complex Hilbert space $\hil$ and denote by $\mathcal{L}^{\dagger}(\D)$ the set of those closable operators which leave, together with their adjoints,  $\D$ invariant:
\begin{equation*}
    \mathcal{L}^{\dagger}(\D)\coloneqq\set{A:\D\to\hil}{\D\subseteq \dom A^*, A(\D)\subseteq \D, A^*(\D)\subseteq \D}.
\end{equation*} 
Clearly, the involution $A^{\dagger}\coloneqq A^*|_{\D}$ makes  $\mathcal{L}^\dagger(\D)$  a $^*$-algebra. Consider a $\dagger$-closed subalgebra $\alg$ of $ \mathcal{L}^{\dagger}(\D)$ that contains the identity operator  and equip $\D$ with a locally convex topology $\tau^{}_{\alg}$ induced by the semi-norms
\begin{equation*}
    |||x|||^{}_A\coloneqq (\|x\|^2+\|Ax\|^2)^{1/2},\qquad A\in\alg, x\in \D.
\end{equation*}
Clearly, $\tau^{}_{\alg}$ is finer then the norm topology induced by $\hil$ on $\D$. The canonical embedding of $\D$ into $\hil$ is continuous with dense range and therefore the adjoint of that map embeds $\hil$ into $\DD$ continuously, when endowed with the strong dual topology $\beta'\coloneqq \beta(\DD,\D)$:
\begin{equation*}
    (\D,\tau^{}_{\alg}) \hookrightarrow (\hil,\|\cdot\|) \hookrightarrow (\DD,\beta').
\end{equation*}
This triplet $(\D,\hil,\DD)$ is usually called a rigged Hilbert space or Gelfand triplet. For more details about rigged Hilbert spaces  see \cite{Antoine} and  \cite[Chapter 3]{Schmudgen2}. Suppose now that $(\D,\tau_\alg)$ is a barrelled space (for example, a Fr\'echet space). Every weakly continuous (and in particular, every positive) operator $A:\D\to\DD$ is then continuous with respect to the topologies $\tau_\alg$ and $\beta'$. On the converse, every linear operator $A$ that is continuous for $\tau_\alg$ and $\beta'$ is weakly continuous. In any case, we shall call $A$ simply continuous. Furthermore, since $\DD$ is weakly quasi-complete, we infer that the anti-dual pair $\dual{\DD}{\D}$ is weak-* sequentially complete. This  enables us to apply our preceding results in this rigged Hilbert space framework.

\begin{theorem}\label{T:riggedSchur}
Let $(\D,\hil,\DD)$ be a rigged Hilbert space and suppose that $(\D,\tau_\alg)$ is a barrelled space. Let $A,B:\D\to\DD$ be continuous linear operators, such that $A$ is positive. The following assertions are equivalent: 
\begin{enumerate}[label=\textup{(\roman*)}]
    \item There is a positive operator $C:\D\to\DD$ such that $\kismatrix{A}{B^*}{B}{C}$ is positive
    \item For every $y\in \D$ there is a constant $M_y\geq0$ such that 
    \begin{equation*}
         \abs{\dual{Bx}{y}}^2\leq M_y\cdot \dual{Ax}{x},\qquad \mbox{for all $x\in \D$}.
     \end{equation*}
\end{enumerate}
If any of (i) or (ii) is satisfied, there exists the smallest positive operator $A_B:\D\to\DD$ that makes the matrix $\kismatrix{A}{B^*}{B}{A_B}$ positive.
\end{theorem}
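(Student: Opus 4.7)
The plan is to recognize Theorem \ref{T:riggedSchur} as a direct specialization of the general Theorem \ref{T:Schur} to the anti-dual pair $\dual{\DD}{\D}$ (with $E_1=E_2=\D$ and $F_1=F_2=\DD$). The preceding discussion in Subsection \ref{Ss:rigged} has already verified the two hypotheses required to invoke Theorem \ref{T:Schur}: first, because $\DD$ is weakly quasi-complete, the anti-dual pair $\dual{\DD}{\D}$ is weak-$^*$ sequentially complete; second, because $(\D,\tau_\alg)$ is barrelled, ``continuous'' in the rigged Hilbert space sense (i.e.\ $\tau_\alg$-$\beta'$-continuous) is equivalent to weakly continuous, so that our operators $A,B:\D\to\DD$ live in $\mathscr{L}(\D;\DD)$ in the sense of Section~2.

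With these observations in place, the proof amounts to a translation of statements. For (i)$\Rightarrow$(ii), I would just quote the implication (i)$\Rightarrow$(ii) of Theorem \ref{T:Schur}. For (ii)$\Rightarrow$(i), the same theorem produces a positive operator $C\in \mathscr{L}(\D;\DD)$ completing the matrix, and under our identification this $C$ is continuous in the rigged Hilbert space sense. The final clause about existence of the smallest such positive operator is the content of \eqref{Schurdef}: the operator $A_B=SS^*$ built from the canonical embedding $J:\hila\to\DD$ and the extension $S$ of $S_0:Ax\mapsto Bx$ is automatically in $\mathscr{L}(\D;\DD)$ and is minimal by Theorem \ref{T:Schur}.

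Since this is essentially a ``transfer'' theorem, there is no genuine obstacle in the argument; the only thing worth flagging is that one should explicitly point out why $A_B$ is continuous in the desired sense (which is nothing more than the equivalence ``weakly continuous $\Leftrightarrow$ $\tau_\alg$-$\beta'$-continuous'' recalled in the paragraph preceding the theorem). Thus the proof will consist of at most a few lines that invoke Theorem \ref{T:Schur} and remark on this identification.
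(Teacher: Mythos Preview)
Your proposal is correct and matches the paper's approach exactly: the paper does not even give a separate proof of Theorem \ref{T:riggedSchur}, since the preceding paragraph establishes that $\dual{\DD}{\D}$ is weak-$^*$ sequentially complete and that continuity in the rigged sense coincides with weak continuity, after which the statement is an immediate instance of Theorem \ref{T:Schur}.
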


The above theorem enables us to introduce the parallel sum of positive operators in the rigged Hilbert space context: under the assumptions of Theorem \ref{T:riggedSchur}, for every pair $A,B\in \mathscr{L}(\D;\DD)$ of positive operators there exists a unique positive  operator $A:B$, called the parallel sum of $A$ and $B$ with quadratic form \eqref{E:A:B}. If $A$ and $B$ satisfy \eqref{E:A-BA} then there exists a unique positive operator $B\div A$, called the parallel difference of $B$ and $A$ with quadratic form \eqref{E:BkulA}. 

Using the concepts of parallel sum and complement we can easily establish a Lebesgue-type decomposition theorem. To this aim we recall first the concepts of regular and singular operators (cf. \cite[Definition 3.1-3.3]{dibella}).
\begin{definition}
We say that the positive operator $A\in \mathscr{L}(\D;\DD)$ is 
\begin{enumerate}[label=\textup{(\alph*)}]
    \item \emph{regular} if for every sequence $\seq x$ in $\D$,
\begin{equation*}
    \|x_n\|\to0 \qquad \mbox{and} \qquad \dual{A(x_n-x_m)}{x_n-x_m}\to0,\qquad (n,m\to+\infty)
\end{equation*}
imply $\dual{Ax_n}{x_n}\to0$,
\item \emph{singular} if for every $x\in \D$ there exists a sequence $\seq x$ in $\D$ such that 
\begin{equation*}
    \|x_n-x\|\to0\qquad \mbox{and}\qquad \dual{Ax_n}{x_n}\to 0, \qquad(n\to+\infty).
\end{equation*}
\end{enumerate}
\end{definition}
Let us denote the natural inclusion operator $\D\hookrightarrow\DD$ by $I_{\D,\DD}$:  $$I_{\D,\DD}:\D\to\DD,\qquad  x\mapsto \sip{x}{\cdot}\equiv \dual{x}{\cdot}.$$ Clearly, $I_{\D,\DD}$ is a positive operator.  It was proved in \cite[Theorem 6.1 (vii)]{Tarcsay-ADP Lebdec} that $B$ is a singular operator if and only if $B$ and $I_{\D,\DD}$ are mutually singular in the sense of Definition \ref{D:LD}. And by definition, $A$ is regular if and only if $A$ is absolutely continuous with respect to $I_{\D,\DD}$. 

As a consequence of Corollary \ref{C:Nishio}, we are in the position to extend the recent Lebesgue-type decomposition theorem \cite[Theorem 8.3]{Tarcsay-ADP Lebdec}. For an analogous result we refer the reader to the recent result \cite[Theorem 3.6]{dibella}.
\begin{theorem} Let $(\D,\hil,\DD)$ be a rigged Hilbert space and assume that $(\D,\tau_\alg)$ is a barrelled space. Then for every positive operator $A\in\mathscr{L}(\D;\DD)$ the operator
\begin{equation}A_r=(I_{\D,\DD}-I_{\D,\DD}:A)_{I_{\D,\DD}}-I_{\D,\DD}
\end{equation}
is regular, and the decomposition
\begin{equation}A=A_r+(A-A_r)
\end{equation}
is a Lebesgue-type decomposition. That is, $A_r$ is regular, $A-A_r$ is singular. 
\end{theorem}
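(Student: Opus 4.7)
The plan is to reduce the statement directly to Theorem~\ref{T:ADP-Lebdec} and Corollary~\ref{C:Nishio} applied to the weak-$^*$ sequentially complete anti-dual pair $\dual{\DD}{\D}$, with the role of $B$ played by $I_{\D,\DD}$. The discussion before the theorem already records that under the barrelled hypothesis on $(\D,\tau_\alg)$ the anti-dual pair $\dual{\DD}{\D}$ is weak-$^*$ sequentially complete and that every positive map $\D\to\DD$ is weakly continuous, hence lies in $\mathscr{L}(\D;\DD)$. Thus the framework of Section~2 applies verbatim with $E=\D$, $F=\DD$.

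Next, since $A$ and $I_{\D,\DD}$ are positive operators in $\mathscr{L}(\D;\DD)$, Corollary~\ref{C:Nishio} (applied with $B:=I_{\D,\DD}$) gives that the right-hand side of the defining formula for $A_r$ makes sense, belongs to $\mathscr{L}(\D;\DD)$, and is equal to $(A:I_{\D,\DD})\div I_{\D,\DD}=\lim_n A:(nI_{\D,\DD})$. By Theorem~\ref{T:ADP-Lebdec}, this $A_r$ is the $I_{\D,\DD}$-absolutely continuous part in a Lebesgue-type decomposition of $A$ with respect to $I_{\D,\DD}$; in particular $A_r\ll I_{\D,\DD}$ and $A-A_r\perp I_{\D,\DD}$ in the sense of Definition~\ref{D:LD}.

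The remaining step is a translation of vocabulary. Since by definition $\dual{I_{\D,\DD}x}{x}=\|x\|^2$ for every $x\in\D$, the condition $\dual{I_{\D,\DD}x_n}{x_n}\to 0$ coincides with $\|x_n\|\to 0$; comparing Definition~\ref{D:LD} with the definition of a regular operator shows that $A\ll I_{\D,\DD}$ is nothing but the regularity of $A$. Applied to $A_r$ this yields that $A_r$ is regular. For the singular part we invoke the cited characterization \cite[Theorem~6.1~(vii)]{Tarcsay-ADP Lebdec}, which states that a positive operator $B\in\mathscr{L}(\D;\DD)$ is singular if and only if $B\perp I_{\D,\DD}$; applied to $A-A_r$ this gives that $A-A_r$ is singular. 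Combining these observations, $A=A_r+(A-A_r)$ is a Lebesgue-type decomposition in the sense of the theorem.

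There is no genuine obstacle in the argument: both the existence and the formula for $A_r$ are borrowed from Corollary~\ref{C:Nishio} and Theorem~\ref{T:ADP-Lebdec}. The only point one must be careful about is to verify that the barrelled hypothesis on $(\D,\tau_\alg)$ is exactly what is needed to place the rigged-space setting inside the anti-dual-pair framework of Section~2, so that the operators appearing inside the formula automatically belong to $\mathscr{L}(\D;\DD)$; once this is observed, the identification of "regular/singular" with "$\ll I_{\D,\DD}$" and "$\perp I_{\D,\DD}$" completes the proof.
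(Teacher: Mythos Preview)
Your proposal is correct and follows essentially the same approach as the paper: the theorem is stated there without a formal proof, but is explicitly presented as a consequence of Corollary~\ref{C:Nishio} (with $B=I_{\D,\DD}$) together with the identifications, recorded just before the theorem, of ``regular'' with $I_{\D,\DD}$-absolute continuity and of ``singular'' with mutual singularity to $I_{\D,\DD}$. Your write-up spells out these reductions precisely and adds nothing beyond what the paper intends.
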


\subsection{Representable functionals}\label{Ss:functionals}
We have already seen some immediate applications of the complement in the operator context. The aim of this section is to show how this general setting can be used in the theory of representable functionals. First we fix the terminology.

Let $\alg$ be a not necessarily unital complex $^*$-algebra. A linear functional $f$ on $\alg$ is called \emph{representable} if there is GNS triple, i.e. a Hilbert space $\hil_f$, a *-homomorphism $\pi_f:\mathscr{A}\to\bhf$ and  a vector $\xi_f\in\hilf$ such that
$$f(a)=\sip{\pi_f(a)\xi_f}{\xi_f}_f,\qquad a\in\mathscr{A}.$$
As it is known,   \eqref{E:representable} and $\eqref{E:bounded}$ below are equivalent conditions of  the representability of a positive functional $f$ (see e.g. Palmer \cite[Theorem 9.4.15]{palmer}): there exists a  constant $C \geq0$ such that   
\begin{equation}\label{E:representable}
    \abs{f(a)}^2\leq C\cdot f(a^*a),\qquad a\in\alg,
  \end{equation}
  and for every $a\in\alg$ there exists $\lambda_a\geq0$ such that
  \begin{equation}\label{E:bounded}
    f(b^*a^*ab)\leq \lambda_a \cdot f(b^*b),\qquad b\in\alg.
  \end{equation}
  The set of representable functionals will be denoted by $\alg^{\sharp}$. The GNS triple mentioned above can be obtained by the following construction: 
since every representable functional $f$ is positive, the map $A:\alg\to \bar \alg^*$ defined by 
\begin{equation*}
    \dual{Aa}{b}\coloneqq f(b^*a),\qquad a,b\in\alg
\end{equation*}
is a positive operator. The range space $\ran A$ becomes a pre-Hilbert space if we endow it by the inner product
  \begin{equation*}%\label{E:sipa}
    \sipf{Aa}{Ab}:=f(b^*a),\qquad a\in\alg.
  \end{equation*}
This is indeed an inner product space as the Cauchy--Schwarz inequality 
  \begin{equation*}\label{E:CBS}
    \abs{f(b^*a)}^2\leq f(a^*a)f(b^*b), \qquad a,b\in\alg.
  \end{equation*}
guarantees that $\sipf{Aa}{Aa}=0$ implies $Aa=0$ for all $a\in\alg$. We denote the Hilbert space completion of this space by $\hilf$. Next we introduce a densely defined continuous operator $\pif(x)$ for all $x\in\alg$ by 
  \begin{equation*}%\label{E:pia}
    \pif(x)(Aa):=A(xa),\qquad a\in\alg.
  \end{equation*}
  The continuity of $\pi_f(x)$ is due to \eqref{E:bounded}, and we continue to write $\pif(x)$ for its unique norm preserving extension. It is easy to verify that $\pif$ is a $^*$-representation of $\alg$ in $\bhf$. The cyclic vector of $\pif$ is obtained by considering the linear functional $Aa\mapsto f(a)$ from $\hilf$ into $\dupC$ whose continuity is guaranteed by \eqref{E:representable}. The Riesz representation theorem yields a unique vector $\xi_f\in\hilf$  satisfying
  \begin{equation}\label{E:zeta_A}
    f(a)=\sipf{Aa}{\xi_f},\qquad a\in\alg.
  \end{equation}
  It is straightforward to verify that
  \begin{equation}\label{E:piazeta}
    \pi_f(a)\xi_f=Aa,\qquad a\in\alg,
  \end{equation}
   whence we infer that
  \begin{equation}\label{E:cyclic}
    f(a)=\sipf{\pif(a)\xi_f}{\xi_f},\qquad a\in\alg.
  \end{equation}

The following is the main result of this section. It provides a sufficient condition to the existence of the complement of functionals.
\begin{theorem}\label{T:Schur_funct}
Let $f,g$ be linear functionals on a $^*$-algebra $\alg$. Suppose that $f$ is representable and that there is a constant $C\geq 0$ such that 
\begin{equation}\label{E:Schur_fuctional}
\abs{g(a)}^2\leq Cf(a^*a),\qquad a\in\alg.
\end{equation}
Then there exists a representable positive functional $h$ such that $f+g+g^*+h$ is representable and 
\begin{equation}\label{E:egh0}
f(a^*a)+g(b^*a)+\overline{g(b^*a)}+h(b^*b)\geq0
\end{equation}
for all $a,b$ in $\alg$. Furthermore, there is a smallest $h$ possessing this property.
\end{theorem}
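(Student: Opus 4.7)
The strategy is to translate the statement to the anti-dual pair setting by applying Theorem \ref{T:Schur} to $\dual{\bar\alg^*}{\alg}$, and then to realize the abstract minimal operator it produces as the one induced by a concretely constructed representable functional. Set
\begin{equation*}
    \dual{Aa}{b} := f(b^*a), \qquad \dual{Ba}{b} := g(b^*a), \qquad a, b \in \alg.
\end{equation*}
These define weakly continuous operators $A, B: \alg \to \bar\alg^*$ with $A \geq 0$. To verify condition (ii) of Theorem \ref{T:Schur}, I would substitute $y^*x$ into \eqref{E:Schur_fuctional} and then apply the representability bound \eqref{E:bounded} to $f$ with $a \mapsto y^*$, $b \mapsto x$, to obtain
\begin{equation*}
    \abs{\dual{Bx}{y}}^2 = \abs{g(y^*x)}^2 \leq C f(x^*yy^*x) \leq C\lambda_{y^*} f(x^*x) = M_y\,\dual{Ax}{x}.
\end{equation*}

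Theorem \ref{T:Schur} then supplies the smallest positive operator $A_B \in \mathscr{L}(\alg;\bar\alg^*)$ making $\kismatrix{A}{B^*}{B}{A_B}$ positive, together with the sup-formula \eqref{C:Cor32} for its quadratic form. To manufacture $h$ I would invoke the GNS triple $(\hilf,\pif,\xi_f)$ of $f$: hypothesis \eqref{E:Schur_fuctional} rewritten as $\abs{g(a)}^2 \leq C\|\pif(a)\xi_f\|^2$ shows that $\pif(a)\xi_f \mapsto g(a)$ is well defined and bounded on the dense subspace $\pif(\alg)\xi_f \subseteq \hilf$, so Riesz delivers a unique $\eta_g \in \hilf$ with $g(a) = \sipf{\pif(a)\xi_f}{\eta_g}$. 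Define
\begin{equation*}
    h(a) := \sipf{\pif(a)\eta_g}{\eta_g}, \qquad a \in \alg.
\end{equation*}
A straightforward check shows that $h$ satisfies \eqref{E:representable} and \eqref{E:bounded} (via Cauchy--Schwarz and the boundedness of $\pif(a)$ on $\hilf$), hence is representable; expanding each summand through $\pif$ yields the identity $(f+g+g^*+h)(a) = \sipf{\pif(a)(\xi_f+\eta_g)}{\xi_f+\eta_g}$, so the same argument applied with the vector $\xi_f+\eta_g$ shows $f+g+g^*+h$ is representable; and inequality \eqref{E:egh0} reduces to $\|\pif(a)\xi_f+\pif(b)\eta_g\|^2 \geq 0$.

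Minimality then reduces to the identification $\dual{A_B b}{b} = h(b^*b)$. Using \eqref{C:Cor32} together with $g(b^*a) = \sipf{\pif(a)\xi_f}{\pif(b)\eta_g}$, $f(a^*a) = \|\pif(a)\xi_f\|^2$, and the density of $\pif(\alg)\xi_f$ in $\hilf$, a direct supremum computation gives $\dual{A_B b}{b} = \|\pif(b)\eta_g\|^2 = h(b^*b)$. For any other representable $h'$ satisfying \eqref{E:egh0}, the operator $\dual{A_{h'} b}{c} := h'(c^*b)$ is positive, the block matrix $\kismatrix{A}{B^*}{B}{A_{h'}}$ is positive by \eqref{E:egh0}, and the minimality asserted in Theorem \ref{T:Schur} yields $A_{h'} \geq A_B$; restricting to the diagonal gives $h'(b^*b) \geq h(b^*b)$ for every $b \in \alg$. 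I expect this final identification to be the main obstacle: Theorem \ref{T:Schur} produces an abstract minimal operator on the anti-dual pair, whereas what is needed is that this operator actually comes from a representable functional on $\alg$. The GNS-based construction of $h$ is precisely what supplies this bridge.
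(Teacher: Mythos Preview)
Your proposal is correct and follows essentially the same approach as the paper's proof: both translate to the anti-dual pair $\dual{\bar\alg'}{\alg}$ via the operators $A,B$ induced by $f,g$, verify hypothesis (ii) of Theorem \ref{T:Schur} using \eqref{E:Schur_fuctional} together with the submultiplicativity bound for $\pi_f$, construct $\eta_g$ by Riesz and set $h(a)=\sipf{\pi_f(a)\eta_g}{\eta_g}$, identify $\dual{A_Bb}{b}=h(b^*b)$ via the sup-formula \eqref{C:Cor32}, and deduce minimality from the minimality of $A_B$. The order of the steps differs slightly, but the argument is the same.
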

\begin{proof}
Consider the following linear functional on $\hilf$ 
\begin{align*}
\pi_f(a)\xi_f\mapsto g(a),\qquad a\in\alg.
\end{align*}
It is bounded according to \eqref{E:Schur_fuctional}, hence there exists a unique $\eta_g\in\hilf$ such that 
\begin{equation}
    g(a)=\sipf{\pi_f(a)\xi_f}{\eta_g},\qquad a\in \alg.
\end{equation}
Let us define $h$ by setting
\begin{equation}\label{E:h}
h(a)=\sipf{\pi_f(a)\eta_g}{\eta_g},\qquad a\in\alg.
\end{equation}
Clearly, $h\in \alg^{\sharp}$. We claim that $h$ is the smallest representable functional with property \eqref{E:egh0}. 
To see this let us consider  the operators $A,B\in\mathscr{L}(\alg;\bar\alg')$ defined by $\dual{Aa}{b}:=f(b^*a)$ and $\dual{Ba}{b}:=g(b^*a)$ ($a,b\in\alg$). Then the incomplete matrix $\kismatrix{A}{B^*}{B}{*}$ is positive as it fulfills condition (ii) of Theorem \ref{T:Schur}:
\begin{align*}
\abs{\dual{Ba}{b}}^2&=\abs{g(b^*a)}^2\leq Cf(a^*bb^*a)=\|\pi_f(b^*a)\xi_f\|^2_f\\
&\leq \|\pi_f(b^*)\|^2  \|\pi_f(a)\xi_f\|^2_f=\|\pi_f(b^*)\|^2\dual{Aa}{a},
\end{align*}
$a,b\in\alg$. By Theorem \ref{T:Schur}, there exists a positive operator  $A_B:\alg\to\bar\alg'$ such that $\kismatrix{A}{B^*}{B}{A_B}$ is positive, furthermore  the quadratic form of $A_B$ is calculated as follows:
\begin{align*}
\dual{A_Ba}{a}&=\sup\set{\abs{\dual{Bb}{a}}^2}{b\in \alg, \dual{Ab}{b}\leq 1}\\
            &=\sup\set{\abs{g(a^*b)}^2}{b\in \alg, \dual{Ab}{b}\leq 1}\\
            &=\sup\set{\abs{\sipf{\pi_f(a^*b)\xi_f}{\eta_g}}^2}{b\in \alg, \|\pi_f(b)\|^2_f\leq 1}\\
            &=\sup\set{\abs{\sipf{\pi_f(b)\xi_f}{\pi_f(a)\eta_g}}^2}{b\in \alg, \|\pi_f(b)\|^2_f\leq 1}\\
            &=\|\pi_f(a)\eta_g\|^2_f\\
            &=h(a^*a).
\end{align*}
Hence, for $a,b$ in $\alg$,
\begin{align*}
f(a^*a)+\overline{g(b^*a)}+g(b^*a)+h(a^*a)=\dual[\big]{\kismatrix{A}{B^*}{B}{A_B}\kispair{a}{b}}{\kispair{a}{b}}\geq 0,
\end{align*}
which shows that $h$ satisfies  \eqref{E:egh0}. To show the minimality of $h$ suppose $h'\in\alg^{\sharp}$ fulfills \eqref{E:egh0} as well. Then letting 
\begin{align*}
\dual{C'a}{b}:=h'(b^*a),\qquad a,b\in\alg,
\end{align*}
one gets $\kismatrix{A}{B^*}{B}{C'}\geq0$ and thus $A_B\leq C'$ by Theorem \ref{T:Schur}, which is equivalent to $h\leq h'.$ Finally, a direct calculations shows that 
\begin{align*}
\sipf{\pi_f(a)(\xi_f+\eta_g)}{\xi_f+\eta_g}=f(a)+g(a)+\overline{g(a^*)}+h(a),\qquad a\in\alg,
\end{align*}
which proves that $f+g+g^*+h\in\alg^{\sharp}$.  
\end{proof}
\begin{definition}
We call the smallest representable functional satisfying \eqref{E:egh0} the \emph{complement of $f$ with respect to $g$}, and we  denote it by $f_g$. The complement $f_g$ can be calculated as
\begin{align*}
f_g(a)=\sipf{\pi_f(a)\eta_g}{\eta_g},\qquad a\in\alg,
\end{align*}
where $\eta_g\in \hilf$ is the representing vector of the bounded linear functional $$\hilf\to\dupC;\qquad \pi_f(a)\xi_f\mapsto g(a).$$
\end{definition}
With the notation of the proof of Theorem \ref{T:Schur_funct}, we  have also that
\begin{align*}
f_g(a^*a)=\dual{A_Ba}{a},\qquad a\in \alg.
\end{align*}
Hence we gain two formulae for $f_g(a^*a)$: from \eqref{C:Cor32} we get 
\begin{equation}\label{E:f_g(a*a)1}
f_g(a^*a)=\sup\set{\abs{g(a^*b)}^2}{b\in \alg, f(b^*b)\leq 1},
\end{equation}
and similarly, \eqref{2cof} yields
\begin{equation}\label{E:f_g(a*a)2}
f_g(a^*a)=\sup\set{g(a^*b)+\overline{g(a^*b)}-f(b^*b)}{b\in\alg}.
\end{equation}
Recall also that, with notation of Theorem \ref{T:Schur}, we have $A_B=SS^*$, where $S:\hila\to F$ is the unique weakly continuous operator defined by 
\begin{align*}
S(Ax)=Bx,\qquad x\in E.
\end{align*} 
This operator plays a key role in the following simple corollary, which provides an elegant formula for the complement in the case when $\alg$ possesses a unit element.

\begin{corollary}Let $f,g$ be linear functionals on a $^*$-algebra $\alg$. Suppose that $f\in\alg^{\sharp}$ and $g$ satisfies \eqref{E:Schur_fuctional}. 
Then the complement $f_g$ of $f$ with respect to $g$ is of the form $f_g=\overline{S\eta_g}$, i.e.
\begin{equation}\label{E:fg=Seta}
f_g(a)=\overline{\dual{S\eta_g}{a}},\qquad a\in \alg.
\end{equation}
If $\alg$ is unital, then $f_g=\overline{A_B1}$, i.e.,
\begin{equation}\label{E:fg=AB1}
f_g(a)=\overline{\dual{A_B1}{a}},\qquad a\in\alg.
\end{equation}
\end{corollary}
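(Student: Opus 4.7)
The plan is to unwind the construction of the operator $S$ from the proof of Theorem \ref{T:Schur_funct} and identify its action on the ``representing vectors'' $\xi_f,\eta_g\in\hilf$. Recall that with $\dual{Aa}{b}\coloneqq f(b^*a)$ and $\dual{Ba}{b}\coloneqq g(b^*a)$, the auxiliary Hilbert space $\hila$ coincides with $\hilf$ (since $\sipa{Aa}{Ab}=f(b^*a)=\sipf{Aa}{Ab}$), and $S\in\mathscr{L}(\hilf;\bar\alg')$ is the weakly continuous extension of $S_0(Aa)=Ba$. The main technical step is to compute $S^*a$ explicitly. From the adjoint relation $\dual{Sh}{a}=\sipf{h}{S^*a}$ applied to $h=Ac$, and from $\dual{S(Ac)}{a}=\dual{Bc}{a}=g(a^*c)$, we obtain
\begin{equation*}
\sipf{Ac}{S^*a}=g(a^*c),\qquad a,c\in\alg.
\end{equation*}
Using the representing property $g(b)=\sipf{\pi_f(b)\xi_f}{\eta_g}$ together with the fact that $\pi_f$ is a $^*$-representation, one rewrites the right-hand side as
\begin{equation*}
g(a^*c)=\sipf{\pi_f(c)\xi_f}{\pi_f(a)\eta_g}=\sipf{Ac}{\pi_f(a)\eta_g}.
\end{equation*}
Since $\ran A$ is dense in $\hilf$ by construction, this identifies $S^*a=\pi_f(a)\eta_g$ for every $a\in\alg$.

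With $S^*$ in hand, \eqref{E:fg=Seta} is immediate: setting $h=\eta_g$ in the adjoint relation gives $\dual{S\eta_g}{a}=\sipf{\eta_g}{S^*a}=\sipf{\eta_g}{\pi_f(a)\eta_g}$, and taking complex conjugates yields
\begin{equation*}
\overline{\dual{S\eta_g}{a}}=\sipf{\pi_f(a)\eta_g}{\eta_g}=f_g(a),
\end{equation*}
by the formula for $f_g$ established in the definition following Theorem \ref{T:Schur_funct}.

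For the unital statement \eqref{E:fg=AB1}, observe that $\pi_f$ being a unital $^*$-representation forces $\pi_f(1)=I_{\hilf}$, hence $S^*1=\pi_f(1)\eta_g=\eta_g$. Consequently,
\begin{equation*}
A_B\,1=SS^*1=S\eta_g,
\end{equation*}
and \eqref{E:fg=AB1} follows at once from \eqref{E:fg=Seta}. The only delicate point in the whole argument is the explicit computation of $S^*$; everything else is formal manipulation of the anti-duality pairing and the $^*$-representation property.
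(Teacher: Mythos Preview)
Your proof is correct and follows essentially the same approach as the paper: both first identify $S^*a=\pi_f(a)\eta_g$ via the adjoint relation and the representing property of $\eta_g$, and then \eqref{E:fg=Seta} drops out immediately. The only cosmetic difference is in the unital case: the paper derives the slightly more general identity $\dual{A_Bx}{a}=\overline{f_g(x^*a)}$ and then sets $x=1$, whereas you go directly via $S^*1=\eta_g$; note that your assertion $\pi_f(1)=I_{\hilf}$ is justified because $\pi_f(1)$ fixes the dense subspace $\ran A=\{\pi_f(a)\xi_f:a\in\alg\}$.
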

\begin{proof}
First observe that $S^*a=\pi_f(a)\eta_g$ holds for all $a\in\alg$. Indeed, according to Theorem \ref{T:Schur_funct}, we have that $g(a)=\sipf{\pi_f(a)\xi_f}{\eta_g}$, and thus
\begin{align*}
\sipa{Afx}{S^*a}&=\dual{S(Ax)}{a}=\dual{Bx}{a}=g(a^*x)\\
               &=\sipf{\pi_f(a^*x)\xi_f}{\eta_g}=\sipf{\pi_f(x)\xi_f}{\pi_f(a)\eta_g}=\sipf{Ax}{\pi_f(a)\eta_g}.
\end{align*}
Using this, we easily conclude that
\begin{align*}
\overline{\dual{S\eta_g}{a}}=\sipf{S^*a}{\eta_g}=\sipf{\pi_f(a)\eta_g}{\eta_g}=f_g(a),
\end{align*}
which implies \eqref{E:fg=Seta}. Furthermore,  $S^*a=\pi_f(a)\eta_g$ implies 
\begin{align*}
\dual{A_Bx}{a}&=\dual{SS^*x}{a}=\sipf{S^*x}{S^*a}\\
              &=\sipf{\pi_f(x)\eta_g}{\pi_f(a)\eta_g}=\sipf{\eta_g}{\pi_f(x^*a)\eta_g}=\overline{f_g(x^*a)},
\end{align*}
which obviously implies \eqref{E:fg=AB1} whenever $1\in\alg$.
\end{proof}

As we have seen in the previous section, the existence of the complement guarantees the existence of some important operations like the parallel sum and parallel difference. Theorem \ref{T:Schur_funct} now allows us to extend these concepts for representable functionals.

We begin with the parallel sum. Consider two representable functionals $f,g \in\alg^{\sharp}$, then $f+g\in\alg^{\sharp}$ and
\begin{align*}
\abs{f(a)}^2\leq M(f(a^*a)+g(a^*a))\qquad a\in\alg
\end{align*}
holds for suitable $M$. Thus the complement $(f+g)_f\in\alg^\sharp$ exists and as
\begin{multline*}
f(a^*a)+g(a^*a)+f(b^*a)+\overline{f(b^*a)}+f(b^*b)=f((a+b)^*(a+b))+g(a^*a)\geq0,
\end{multline*}
we conclude from Theorem \ref{T:Schur_funct} that $(f+g)_f$ exists and $f\geq (f+g)_f$. Therefore the following definition is correct.

\begin{definition}
Let $f$ and $g$ be representable functionals on the ${}^*$-algebra $\alg$. Then the parallel sum $f:g$ defined by $(f:g):=f-(f+g)_f$ is a representable functional as well. From  \eqref{E:f_g(a*a)2} it is easy to check that $f:g$ satisfies 
\begin{equation}\label{E:par}
(f:g)(a^*a)=\inf\set{f((a+b)^*(a+b))+g(b^*b)}{b\in\alg}.
\end{equation}
\end{definition}
For a completely different approach to the parallel sum see \cite{Tarcsay_parallel}. Now we proceed with the notion of parallel difference.

\begin{definition}
Let $f$ and $g$ be representable functionals on the ${}^*$-algebra $\alg$, and assume that $f\geq g$ and the complement $(f-g)_f\in\alg^{\sharp}$ exists. Then the parallel difference $g\div f\in\alg^{\sharp}$ is defined by $g\div f:=(f-g)_f-f$.
\end{definition}
It is easy to deduce from \eqref{E:f_g(a*a)2} that $(g\div f):=(f-g)_f-f$ satisfies 
\begin{equation}\label{E:pardiff_funct}
(g\div f)(a^*a)=\sup\set{g((a+b)^*(a+b))-f(b^*b)}{b\in\alg}.
\end{equation}
As it was pointed out in the operator setting, the assumption $f\geq g$ is not enough for the existence of $g\div f$. However, Theorem \ref{T:Schur_funct} offers a sufficient condition as follows:  assume that
\begin{align}\label{E:pardiff_felt}
\abs{f(a)}^2\leq C\cdot(f(a^*a)-g(a^*a)),\qquad a \in \alg
\end{align}
holds for some $C\geq0$. Then \eqref{E:pardiff_felt}  implies that $f-g\in \alg^{\sharp}$ and that the complement $(f-g)_f\in\alg^{\sharp}$ exists.  

To conclude the paper, we establish a  Lebesgue decomposition theorem for representable functionals by means of the parallel sum and difference.  
\begin{definition}
Let $f$ and $g$ be representable functionals on the ${}^*$-algebra $\alg$. We say that \emph{$f$ is $g$-absolutely continuous} ($f\ll g$) if for every sequence $\seq a$ of $\alg$ such that $g(a_n^*a_n)\to0$ and $f((a_n-a_m)^*(a_n-a_m))\to0$ it follows that $f(a_n^*a_n)\to0$. Furthermore, $f$ and $g$ are \emph{singular} ($f\perp g$) with respect to each other if $h=0$ is the only representable functional such that $h\leq f$ and $h\leq g$. A decomposition $f=f_1+f_2$ is called a \emph{Lebesgue-type decomposition of $f$ with respect to $g$} if $f_1\ll g$ and $f_2\perp g$. 
\end{definition}
For the sake of simplicity, let us introduce the following temporary notation: if a representable functional $h$ is given, we denote its induced operator by $A_h$.  Accordingly, it is obvious that $A_f:A_g=A_{f:g}$ and $A_f\div A_g=A_{f\div g}$ due to \eqref{E:par} and \eqref{E:pardiff_funct}. This simple observation leads us to the following Lebesgue type decomposition theorem (for earlier versions of the Lebesgue decomposition of representable functionals see \cite{Gudder, Kosaki, Tarcsay-funcdec, Tarcsay-ADP Lebdec, Glasgow, pems}, and the references therein).
\begin{theorem}
Let $\alg$ be a ${}^*$-algebra, and assume that $f,g\in\alg^{\sharp}$. Then the functional $f_r=(f:g)\div g$ is representable and $g$-absolutely continuous. Furthermore, the decomposition
\begin{equation}
    f=f_r+(f-f_r)
\end{equation}
is a Lebesgue-type decomposition of $f$ with respect to $g$.
Here the absolutely continuous part $f_r$ can be written as
\begin{equation}
    f_r=(g-g:f)_{g}-g.
\end{equation}

\end{theorem}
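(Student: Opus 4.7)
The plan is to lift the operator-level Lebesgue decomposition (Theorem~\ref{T:ADP-Lebdec} together with Corollary~\ref{C:Nishio}) to the functional setting via the correspondence $h \mapsto A_h$ from representable functionals on $\alg$ to their induced positive operators on the weak-$^*$ sequentially complete anti-dual pair $\dual{\bar{\alg}'}{\alg}$. I would set $A := A_f$ and $B := A_g$ and apply Theorem~\ref{T:ADP-Lebdec} to obtain $R := (A : B) \div B \in \mathscr{L}(\alg; \bar{\alg}')$, which is $B$-absolutely continuous, makes $A - R$ $B$-singular, and satisfies $R = \lim_n A : nB$ pointwise.

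The next step is to identify $R$ with a representable functional. By the observation preceding the theorem, $A : nB = A_{f:ng}$, and \eqref{E:par} shows that the sequence $(f:ng)(a^*a)$ is monotonically increasing in $n$ and bounded above by $f(a^*a)$, so the pointwise limit $f_r(a) := \lim_n (f:ng)(a)$ exists on hermitian squares and then extends to all of $\alg$ via the operator identity $\dual{Ra}{b} = \lim_n (f:ng)(b^*a)$ together with polarisation. I would then verify that $f_r$ satisfies the representability axioms \eqref{E:representable}--\eqref{E:bounded}: condition \eqref{E:bounded} passes to the limit from the uniform bound $(f:ng)(b^*a^*ab) \leq f(b^*a^*ab) \leq \lambda_a f(b^*b)$, while \eqref{E:representable} follows from a Riesz representation argument in $\hilf$ combined with $f:ng \leq f$. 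This secures $f_r \in \alg^{\sharp}$ with $A_{f_r} = R$, and the second half of the observation identifies $f_r$ with $(f:g) \div g$.

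The decomposition properties transfer directly because Definition~\ref{D:LD} at the operator level is phrased in terms of sequences $(x_n) \subset \alg$ with pairings $\dual{A_h x_n}{x_n} = h(x_n^* x_n)$ matching the functional-level definitions verbatim; hence $B$-absolute continuity of $R$ becomes $g$-absolute continuity of $f_r$, and $B$-singularity of $A - R$ becomes $(f - f_r) \perp g$. For the alternative formula, I would apply Corollary~\ref{C:Nishio} to get $R = (B - B:A)_B - B = (A_{g - g:f})_{A_g} - A_g$, where $g - g:f$ is representable as a positive functional dominated by $g$; translating back yields $f_r = (g - g:f)_g - g$.

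The main obstacle is the identification $A_{f_r} = R$: verifying that the operator produced by Theorem~\ref{T:ADP-Lebdec} really lies in the image of the map $h \mapsto A_h$ from $\alg^{\sharp}$. This requires combining the monotone pointwise convergence of the parallel sums $f:ng$ with a careful check of the representability axioms, especially in the non-unital case where one cannot simply evaluate at a unit element; once this identification is secured, every remaining assertion reduces to a routine translation of the operator-level Lebesgue decomposition.
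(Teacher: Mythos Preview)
Your approach is exactly the paper's: the theorem is stated without explicit proof, the paper merely recording beforehand that $A_{f:g}=A_f:A_g$ and $A_{f\div g}=A_f\div A_g$, so that Theorem~\ref{T:ADP-Lebdec} and Corollary~\ref{C:Nishio} transfer verbatim through the correspondence $h\mapsto A_h$. You are in fact more explicit than the paper about the one nontrivial point, namely that the operator $R=(A_f:A_g)\div A_g$ is induced by a \emph{representable} functional; the paper's definitions of the functional parallel sum and difference simply assert membership in $\alg^{\sharp}$ and leave this to the reader.

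One small correction to your sketch: the chain $(f:ng)(b^*a^*ab)\leq f(b^*a^*ab)\leq \lambda_a f(b^*b)$ bounds by $f(b^*b)$, not by $(f:ng)(b^*b)$, so it does not give \eqref{E:bounded} for $f_r$. Instead, insert $b=ab'$ into \eqref{E:par} to obtain $(f:ng)(c^*a^*ac)\leq \max(\lambda_a,\mu_a)\,(f:ng)(c^*c)$, where $\lambda_a,\mu_a$ are the constants for $f$ and $g$ respectively; this bound is uniform in $n$ (the constant for $ng$ in \eqref{E:bounded} is still $\mu_a$) and passes to the limit. With that adjustment your plan is complete.
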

%\section*{References}

\end{document}